\documentclass[a4paper,12pt]{article}

\usepackage{tgtermes}



\usepackage{amsfonts,anysize,amsmath,amsthm,geometry}
\usepackage{bm}
\marginsize{30mm}{25mm}{18mm}{18mm}
\usepackage{lineno}
\usepackage[colorlinks,citecolor=red,linkcolor=blue]{hyperref}
\usepackage{mathrsfs}
\usepackage{amssymb}
\usepackage{stmaryrd}
\usepackage{array}
\usepackage{graphicx}
\usepackage{booktabs}
\usepackage{multirow}
\usepackage{algorithm}
\usepackage[figuresright]{rotating}
\usepackage{epstopdf}
\usepackage{algorithmic}

\numberwithin{equation}{section}
\newtheorem{theorem}{Theorem}[section]
\newtheorem{lemma}[theorem]{Lemma}

\theoremstyle{definition}

\newtheorem{remark}{Remark}[section]
\newtheorem{example}{Example}[section]

\def \rank {\mathrm{rank}}

\linespread{1.17}

\begin{document}

\title{An efficient numerical method for condition number constrained covariance matrix approximation \thanks{The work was supported by Shandong Provincial Natural Science Foundation (Grant No. ZR2020QA034)  and the author was also partly supported by the Shandong Provincial Natural Science Foundation (Grant No. ZR2019MA002).}}

\author{Shaoxin Wang\footnote{Corresponding author. School of Statistics, Qufu Normal University,  Qufu, 273165, P. R. China. Email addresses: shxwang@qfnu.edu.cn  and  shwangmy@163.com.}}

\date{\small Last modified on \today}

\maketitle

\begin{abstract}
  In the high-dimensional data setting, the sample covariance matrix is singular. In order to get a numerically stable and positive definite modification of the sample covariance matrix in the high-dimensional data setting, in this paper we consider the condition number constrained covariance matrix approximation problem and present its explicit solution with respect to the Frobenius norm. The condition number constraint guarantees the numerical stability and positive definiteness of the approximation form simultaneously. By exploiting the special structure of the data matrix in the high-dimensional data setting, we also propose some new algorithms based on efficient matrix decomposition techniques. Numerical experiments are also given to show the computational efficiency of the proposed algorithms.
\end{abstract}

\noindent \textbf{Keywords:} Covariance matrix,  Condition number constraint, Singular value decomposition, High-dimensional data

\noindent \textbf{MSC[2010]:} 65F35, 15A12, 15A60

\section{Introduction}
\label{sect.1}
In multivariate statistical analysis, the covariance matrix is a fundamental component of many statistical models, such as linear (quadratic) discriminant analysis, principal component analysis and canonical correlation analysis \cite{Ande03}. The covariance matrix also finds  its popularity in many other applied disciplines. Examples include classification of gene expression \cite{KhanW01}, machine learning \cite{CaoB09}, portfolio management \cite{Mkwz52,Ldot04}and so on. However, in most of practical applications, the underlying true covariance matrix is never to be known and usually estimated from the given data. Thus many covariance matrix estimation procedures have been developed under various data settings, and the interested readers should be referred to \cite{CaoB09,Ldot04,XueM12,LiuW14,WonL13} and the references therein.

The Big Data era brings much more complex data settings and challenges for covariance matrix approximation. Especially, the high-dimensional data analysis, where the dimension of variables $p$ is much larger than the number of observations $n$, is a very active topic of scientific research in the big data analysis \cite{KhanW01,CaoB09,Ldot04}. In such data setting, although the following sample covariance matrix
\begin{equation}\label{eq.defSn}
  {S}_n=\frac{1}{n}\sum_{i=1}^n{x}_i{x}_i^T=\frac{1}{n}X_nX_n^T \quad\text{with}\; X_n=[x_1,\cdots,x_n]\text{ and } x_i\in \mathbb{R}^{p}
\end{equation}
maximizes the likelihood function of centered data under a normal model and is an asymptotically unbiased estimate of the underlying true covariance matrix $\Sigma$, it may have poor performance in approximating the eigenstructure of $\Sigma$, especially when $p$ is close to or larger than $n$ \cite{Ldot04,Bai08}.
To show this, let ${x}_1, \cdots, {x}_n$ be observations of a random vector $\mathbf{x} \in \mathbb{R}^{p}$ from a multivariate normal distribution $\mathcal{N}_p(0,\Sigma)$. We generate 100 groups of data from $\mathcal{N}(0,I_p)$, and plot the mean of the $i$-th largest eigenvalues of $S_n$ ($i=1, \cdots, p$) in Figure~\ref{toyexample}.
\begin{figure}[htp]
  \centering
  \includegraphics[width=1\textwidth,height=0.34\textwidth]{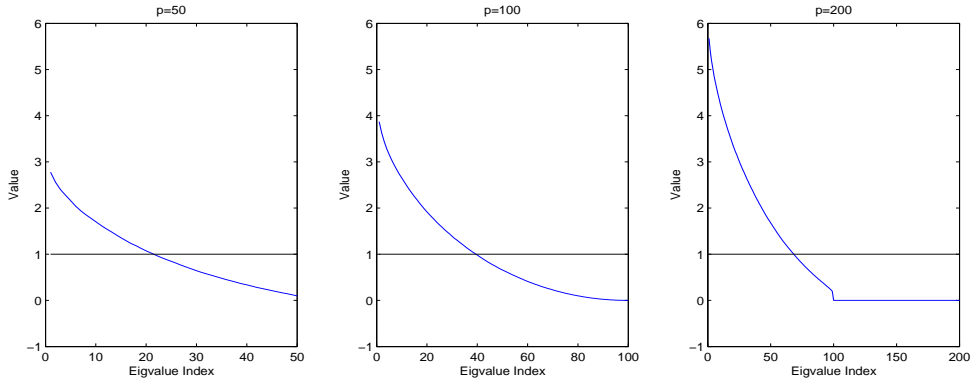}\\
  \caption{The distribution of eigenvalues of ${S}_n={1}/{n}\sum_{i=1}^n{x}_i{x}_i^T$ with $x_i\sim \mathcal{N}(0,I_p)$ and $n=100$.}\label{toyexample}
\end{figure}
{\color {black} Since we set $\Sigma=I_p$, the true eigenvalues of $\Sigma$ are equal to 1 and $\Sigma$ is also well-conditioned. However, from Figure~\ref{toyexample} we can find that when $n$ approaches $p$ the eigenvalues of $S_n$ become more and more dispersed. When $p>n$, $S_n$ may not only have dispersed eigenvalues but also become singular. Thus, in order to get a suitable approximation of $\Sigma$ from such data setting, a natural idea is to remove or decrease the excess dispersion of the extreme eigenvalues and make the final approximation form positive definite.}

To approximate $\Sigma$ in the high-dimensional data setting, numerous statistical or numerical methods have been proposed with respect to different criterions (cf.\cite{FanLL16,High18}). In statistics, the main research interest on high-dimensional covariance matrix estimation is to establish the statistical consistency property of the estimators, which relies on the assumptions of sparse covariance structure and the moment conditions of random variables \cite{BicL08,BienT11,LaFa09,RotLZ09}. However, in practical computation,  the computationally efficient thresholding techniques may lead the estimates to be indefinite \cite{BicL08,RotLZ09}. Therefore, some shrinkage and constraint methods are proposed to ensure the positive definiteness of the estimates \cite{XueM12,LiuW14,Ldot04,Ldot12,High18}. Considering numerical stability, Won et al. \cite{WonL13} proposed the condition number constrained maximum likelihood estimator of covariance matrix, which enjoys the statistical consistency property under some strong assumptions of covariance structure \cite{GuoZ17}.  The idea was extended to the maximum likelihood estimation of structured covariance matrix \cite{AubD12} and precision matrix \cite{GuoZ17}.  The introduction of condition number constraint can not only guarantee the positive definiteness of estimated covariance matrix but also its numerical stability.

The assumptions on covariance structure and the distribution of random variables may be restrictive and hard to verify in practical applications. Under different loss functions or transformations, some optimal matrix approximation techniques are proposed to obtain positive semidefinite covariance matrix approximations \cite{High02,Duan16,Duan14,FanLL16}. Considering positive definiteness and numerical stability, Tanaka and Nakata \cite{TanaN14} considered condition number constrained positive definite matrix approximation problem with unitarily invariant norm, which can be used to get the covariance matrix estimation from a modified sample covariance matrix. Although the authors suggested that the spectral decomposition and binary search can be used to find the optimal solution, calculating all the eigenvalues and eigenvectors may be infeasible in the high-dimensional data setting. In this paper, we reconsider the condition number constrained covariance matrix approximation problem with a different norm and present a new characterization of the solution. By exploring the structure of data matrix in high-dimension data setting, we will also investigate efficient numerical algorithms for finding the optimal solution.

The reminder of the paper is organized as follows. In Section~\ref{sec.2}, a reformulation of condition number constrained covariance matrix approximation problem will be given, and we also give a comparison with the existing works. Our main results are given in Section~\ref{sec.3}, in which we present the explicit expression of the solution to condition number constrained covariance matrix approximation problem and propose some new and efficient algorithms. To show the efficiency of our results, the numerical experiments are performed in Section~\ref{sec.4}. Section~\ref{sec.5} contains the concluding remark of the whole paper.

\section{Problem reformulation}
\label{sec.2}
The main idea to overcome the excess dispersion of sample eigenvalues is to pull the extreme sample eigenvalues back to some target, so some shrinkage methods are proposed \cite{Ldot04,Ldot12,High18}. However, we note that when $p$ is much larger than $n$, the linear shrinkage techniques may give unreliable approximation of covariance matrix \cite{WangY19,Ldot04,Ldot12}. To remove the influence of extreme sample eigenvalues, a more straightforward method is to directly bound the eigenvalues or condition number of covariance matrix, which has been widely used in high-dimensional covariance matrix estimation with respect to different loss functions \cite{XueM12, WonL13,LiuW14,GuoZ17}.

In this work,
we also confine ourself to the high-dimensional data setting, but we will reconsider the condition number constrained covariance matrix approximation (C3MA) problem under the assumption that the data matrix $X_n$ is of full-column rank. The C3MA problem can be stated as follows
\begin{equation}
\label{eq.covcd}
\hat{\Sigma}=\mathop\mathrm{argmin}_{\Sigma \in \mathcal{P}_{+}^{p}} \left\| \Sigma- S_n \right\|_F, \quad\text{ subject to }\;\kappa(\Sigma)\leq \kappa_n,
\end{equation}
where $S_n$ is defined by \eqref{eq.defSn},  $\left\| \cdot \right\|_F$ denotes the Frobenius norm, $\mathcal{P}_{+}^{p}$  is the cone of $p\times p$ positive semidefinite matrices, $\kappa(\Sigma)=\lambda_{1}(\Sigma)/\lambda_{p}(\Sigma)$ is the condition number of $\Sigma$ with $\lambda_{1}$ and $\lambda_{p}$ being the maximum and minimum eigenvalues of $\Sigma$, and $\kappa_n\in[1,\infty)$ is a finite positive real number.
Similar to \cite{TanaN14}, the C3MA problem defined by \eqref{eq.covcd} is almost equivalent to the following problem
\begin{equation}
\label{eq.covcd2}
\hat{\Sigma}=\mathop\mathrm{argmin}_{\Sigma \in \mathcal{P}_{+}^{p}} \left\| \Sigma- S_n \right\|_F, \quad\text{ subject to }\;\lambda_1\leq \kappa_n\lambda_p.
\end{equation}
The only difference between \eqref{eq.covcd} and \eqref{eq.covcd2} is that $\Sigma = 0$ is the feasible solution to \eqref{eq.covcd2} but can not be the solution to \eqref{eq.covcd}, when $S_n=0$. In this work, we employ the assumption that  $X_n$ is of full-column rank to exclude this trivial case, and a detailed comparison with the existing works will be given in Section~\ref{subsec:2.1}.
According to \eqref{eq.covcd}, we can realize that $\hat{\Sigma}$ is not only positive definite but also enjoys some numerical stability. It can also be checked that if $S_n$ is positive definite and  $\kappa(S_n)\leq \kappa_n$ then $S_n$ is the optimal solution. However, in the high-dimensional data setting, $S_n$ is singular and thus can not be the feasible solution to \eqref{eq.covcd}.  Moreover, it is easy to check that the C3MA problem \eqref{eq.covcd} is convex and its proof can be similarly derived as \cite{TanaN14b}.

\subsection{Related work and useful result}
\label{subsec:2.1}

The idea of using condition number constraint to ensure the numerical stability of covariance matrix may be first used in maximum likelihood estimation of covariance matrix with applications in portfolio management and radar signal processing \cite{Won06,WonL13,AubD12}. Without considering the distribution of random variable, some scholars considered condition number constrained covariance matrix approximation problem. For example, Tanaka and Nakata considered the positive definite matrix approximation with condition number constraint \cite{TanaN14}. Then the result was extended to condition number constrained non-square matrix approximation problem arising from communication systems \cite{TonG14}. There are also some other related works on various matrix approximation problem with condition number constraint, and interested readers are referred to \cite{GuoZ17,TanaN14b,LLV20}.

The most relevant works are given by Tanaka and Nakata \cite{TanaN14}  and  Li et al. \cite{LLV20}.  Tanaka and Nakata \cite{TanaN14} studied the C3MA problem \eqref{eq.covcd} with the norm to be unitarily invariant and $S_n$ being only symmetric.  Li et al. \cite{LLV20} considered the same problem but with the norm being the Frobenius norm. The following Lemma~\ref{Lem.Tana1} was given in \cite{TanaN14} to characterize their solution. Since the Frobenius norm is also unitarily invariant and $S_n$ is symmetric by construction, Lemma~\ref{Lem.Tana1} can be directly applied to simplify the C3MA problem \eqref{eq.covcd}  and thus we present the corresponding result as Lemma~\ref{Lem.2} without proof.

\begin{lemma}
\label{Lem.Tana1}
Let the Frobenius norm $\|\cdot\|_F$ in \eqref{eq.covcd2} be replaced with any unitarily invariant norm $\|\cdot\|$ and ${S}_n$ only a symmetric matrix. Given the spectral decomposition ${S}_n=U\hat{\Lambda} U^T$ with $\hat{\Lambda}=\mathrm{Diag}(\hat{\lambda}_1,\cdots,\hat{\lambda}_p)$ and $\hat{\lambda}_1\geq\cdots\geq\hat{\lambda}_p$, if $\Sigma=0$ is not the feasible solution,  then for any optimal solution $\Lambda^{*}$ to the following problem
\begin{equation}
\label{eq.Tana1}
 \Lambda^{*}= \mathop\mathrm{argmin}_{\Lambda=\mathrm{Diag}(\lambda_1,\cdots,\lambda_p)}\left\|\Lambda- \hat{\Lambda}\right\|, \quad\text{ subject to }\;\min_i \lambda_i\geq 0 \;\; \text{and}\;\; \max_i \lambda_i\leq \kappa_n \min_i \lambda_i,
\end{equation}
$\hat{\Sigma}=U\Lambda^{*}U^T$ is the optimal solution to problem \eqref{eq.covcd2}.
\end{lemma}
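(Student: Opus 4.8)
The plan is to reduce the matrix problem \eqref{eq.covcd2} to the scalar problem \eqref{eq.Tana1} over the eigenvalues, exploiting the unitary invariance of $\|\cdot\|$ together with a classical eigenvalue perturbation inequality. The guiding idea is that, for any feasible $\Sigma\in\mathcal{P}_{+}^{p}$, the quantity $\|\Sigma-S_n\|$ can never fall below the corresponding distance between the ordered eigenvalues of $\Sigma$ and those of $S_n$, and this lower bound is attained precisely when $\Sigma$ is forced to share the eigenvectors $U$ of $S_n$.

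First I would invoke the Mirsky inequality, the extension of the Hoffman--Wielandt inequality to an arbitrary unitarily invariant norm: for symmetric matrices $A$ and $B$ with decreasingly ordered eigenvalues $\alpha_1\geq\cdots\geq\alpha_p$ and $\beta_1\geq\cdots\geq\beta_p$, one has $\|\mathrm{Diag}(\alpha_1-\beta_1,\cdots,\alpha_p-\beta_p)\|\leq\|A-B\|$. Taking $A=\Sigma$ and $B=S_n$, and writing $\lambda_1\geq\cdots\geq\lambda_p$ for the ordered eigenvalues of $\Sigma$, this yields $\|\Sigma-S_n\|\geq\|\mathrm{Diag}(\lambda_i-\hat\lambda_i)\|$ for every feasible $\Sigma$.

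Next I would observe that the ordered eigenvalues of any feasible $\Sigma$ furnish a feasible point of \eqref{eq.Tana1}: positive semidefiniteness gives $\min_i\lambda_i\geq 0$, and the condition number constraint $\lambda_1\leq\kappa_n\lambda_p$ is precisely $\max_i\lambda_i\leq\kappa_n\min_i\lambda_i$. Setting $\Lambda=\mathrm{Diag}(\lambda_1,\cdots,\lambda_p)$, optimality of $\Lambda^{*}$ for \eqref{eq.Tana1} then gives $\|\mathrm{Diag}(\lambda_i-\hat\lambda_i)\|=\|\Lambda-\hat\Lambda\|\geq\|\Lambda^{*}-\hat\Lambda\|$, and chaining this with the Mirsky bound shows $\|\Sigma-S_n\|\geq\|\Lambda^{*}-\hat\Lambda\|$ uniformly over all feasible $\Sigma$. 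The hypothesis excluding $\Sigma=0$ serves only to rule out the degenerate optimum and to make \eqref{eq.covcd2} agree with the original problem \eqref{eq.covcd}.

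It then remains to check that this lower bound is achieved by the proposed $\hat\Sigma=U\Lambda^{*}U^T$. This matrix is symmetric, its eigenvalues are the diagonal entries of $\Lambda^{*}$ and hence satisfy the constraints of \eqref{eq.covcd2}, so it is feasible; moreover unitary invariance gives $\|\hat\Sigma-S_n\|=\|U(\Lambda^{*}-\hat\Lambda)U^T\|=\|\Lambda^{*}-\hat\Lambda\|$, matching the bound, whence $\hat\Sigma$ is optimal. The main obstacle is the Mirsky inequality itself: its justification rests on the Lidskii--Wielandt majorization relation, which asserts that the vector of ordered eigenvalue differences is majorized by the spectrum of $A-B$, combined with the fact that every unitarily invariant norm is a symmetric gauge function of the singular values and is therefore monotone under weak majorization. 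Once this ingredient is in place, the remaining steps are routine manipulations with the unitary invariance of the norm.
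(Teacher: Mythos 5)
Your proof is correct. Note, however, that the paper contains no proof of this lemma to compare against: it is quoted from Tanaka and Nakata \cite{TanaN14} and explicitly presented without proof (as is its Frobenius specialization, Lemma~\ref{Lem.2}). Your argument --- lower-bound $\left\|\Sigma-S_n\right\|$ by the Lidskii--Mirsky--Wielandt inequality for unitarily invariant norms, observe that the ordered spectrum of any feasible $\Sigma$ is feasible for \eqref{eq.Tana1}, and then show the bound is attained at $\hat{\Sigma}=U\Lambda^{*}U^T$ by unitary invariance --- is the standard route for results of this type and is a valid self-contained substitute for the citation. It also fits what the paper's remark following Lemma~\ref{Lem.2} identifies as the main tools in this literature (the Hoffman--Wielandt theorem \cite[pp.~368]{RogC94a} and Birkhoff's theorem \cite[pp.~527]{RogC94a}), with one important and correct adjustment on your part: the classical Hoffman--Wielandt/Birkhoff argument is specific to the Frobenius norm, whereas the lemma is stated for an arbitrary unitarily invariant norm, so Mirsky's extension (Lidskii majorization plus monotonicity of symmetric gauge functions under weak majorization) is exactly the ingredient needed; a purely Hoffman--Wielandt argument would only yield Lemma~\ref{Lem.2}. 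Two minor observations: (i) your proof never actually uses the hypothesis that $\Sigma=0$ is not feasible, and your reading of it is right --- it serves only to make \eqref{eq.covcd2} interchangeable with \eqref{eq.covcd}, which is also how the paper handles it via the full-column-rank assumption on $X_n$; (ii) feasibility of $\hat{\Sigma}$ does not require the diagonal entries of $\Lambda^{*}$ to be decreasingly ordered, since the constraints of \eqref{eq.Tana1} are permutation-invariant, so no reordering step is missing from your attainment argument.
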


\begin{lemma}
\label{Lem.2}
Assume that $X_n$ is of full-column rank, and the spectral decomposition of sample covariance matrix defined by \eqref{eq.defSn} is given by ${S}_n=U\hat{\Lambda} U^T$ with $\hat{\Lambda}=\mathrm{Diag}(\hat{\lambda}_1,\cdots,\hat{\lambda}_p)$ and $\hat{\lambda}_1\geq\cdots\geq\hat{\lambda}_n> \hat{\lambda}_{n+1}=\cdots=\hat{\lambda}_p=0$. Then for any optimal solution $\Lambda^{*}$ to the following problem
\begin{equation}
\label{eq.lem2}
 \Lambda^{*}= \mathop\mathrm{argmin}_{\Lambda=\mathrm{Diag}(\lambda_1,\cdots,\lambda_p)}\left\|\Lambda- \hat{\Lambda}\right\|_F, \quad\text{ subject to }\;\min_i \lambda_i\geq 0 \;\; \text{and}\;\; \max_i \lambda_i\leq \kappa_n \min_i \lambda_i,
\end{equation}
$\hat{\Sigma}=U\Lambda^{*}U^T$ is the optimal solution to the C3MA problem \eqref{eq.covcd}.
\end{lemma}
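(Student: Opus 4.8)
The plan is to obtain Lemma~\ref{Lem.2} as a direct specialization of Lemma~\ref{Lem.Tana1}, so the real work is to verify that the full-column-rank hypothesis makes all the assumptions of Lemma~\ref{Lem.Tana1} hold, and then to bridge the small gap between problems \eqref{eq.covcd2} and \eqref{eq.covcd}. First I would record the structural facts about $S_n$. By construction $S_n=\frac{1}{n}X_nX_n^T$ is symmetric and positive semidefinite, hence it admits a spectral decomposition $S_n=U\hat\Lambda U^T$ with $U$ orthogonal and $\hat\lambda_1\geq\cdots\geq\hat\lambda_p\geq 0$. Since $X_n\in\mathbb{R}^{p\times n}$ has full column rank, $\rank(S_n)=\rank(X_nX_n^T)=\rank(X_n)=n$ (using $\rank(AA^T)=\rank(A)$), so exactly $n$ eigenvalues are positive and the remaining $p-n$ vanish; this gives the stated ordering $\hat\lambda_1\geq\cdots\geq\hat\lambda_n>\hat\lambda_{n+1}=\cdots=\hat\lambda_p=0$. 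In particular $\hat\lambda_1>0$, so $S_n\neq 0$.

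Next I would check the precondition of Lemma~\ref{Lem.Tana1}, namely that $\Sigma=0$ is not the optimal feasible solution to \eqref{eq.covcd2}. Because $S_n\neq 0$, the objective at $\Sigma=0$ equals $\|S_n\|_F>0$, whereas any feasible matrix obtained by shrinking the spectrum of $S_n$ toward a well-conditioned target strictly lowers the objective, so $\Sigma=0$ cannot be optimal. The Frobenius norm is unitarily invariant and $S_n$ is symmetric, so the remaining hypotheses of Lemma~\ref{Lem.Tana1} hold verbatim. Invoking that lemma, for any minimizer $\Lambda^{*}$ of the diagonal subproblem \eqref{eq.lem2} (which is exactly \eqref{eq.Tana1}), the matrix $\hat\Sigma=U\Lambda^{*}U^T$ solves \eqref{eq.covcd2}.

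The main obstacle, and the only genuinely new point beyond Lemma~\ref{Lem.Tana1}, is transferring optimality from \eqref{eq.covcd2} to the C3MA problem \eqref{eq.covcd}. Here I would observe that the feasible set of \eqref{eq.covcd2} equals that of \eqref{eq.covcd} together with the single point $\Sigma=0$: when $\lambda_p>0$ the two constraints $\lambda_1\leq\kappa_n\lambda_p$ and $\kappa(\Sigma)\leq\kappa_n$ coincide, while the case $\lambda_p=0$ is feasible for \eqref{eq.covcd2} only at $\Sigma=0$. Thus it suffices to show that the minimizer $\hat\Sigma$ is nonzero, equivalently positive definite. This follows from the constraint in \eqref{eq.lem2}: if $\min_i\lambda_i=0$, then $\max_i\lambda_i\leq\kappa_n\min_i\lambda_i=0$ forces $\Lambda^{*}=0$ and $\hat\Sigma=0$, contradicting the optimality just established (since $S_n\neq 0$). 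Hence $\min_i\lambda_i>0$, so $\Lambda^{*}$ is positive definite and $\hat\Sigma=U\Lambda^{*}U^T$ is positive definite, placing $\hat\Sigma$ in the feasible set of \eqref{eq.covcd}. Since \eqref{eq.covcd} minimizes the same objective over a subset of the feasible set of \eqref{eq.covcd2} that still contains the global minimizer $\hat\Sigma$, the matrix $\hat\Sigma$ is optimal for \eqref{eq.covcd} as well, which completes the argument.
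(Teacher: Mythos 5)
Your proof is correct and takes essentially the same route as the paper: the paper states Lemma~\ref{Lem.2} without proof, as a direct specialization of Lemma~\ref{Lem.Tana1} (the Frobenius norm being unitarily invariant and $S_n$ symmetric), and its subsequent remark records precisely your bridging observation that the full-column-rank assumption makes \eqref{eq.covcd} and \eqref{eq.covcd2} equivalent. Your writeup simply supplies the details the paper leaves implicit, namely $\mathrm{rank}(S_n)=n$, the non-optimality of $\Sigma=0$ for \eqref{eq.covcd2}, and the identification of the two feasible sets up to the single point $\Sigma=0$.
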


\begin{remark}
The assumption that $X_n$ is of full-column rank makes $\Sigma=0$ can not be the feasible solution to problem \eqref{eq.covcd2},  and  under this assumption these two problems \eqref{eq.covcd} and \eqref{eq.covcd2} are equivalent.
Though different methods have been proposed to reduce the condition number constrained covariance or precision matrix estimation problem into manipulation of its eigenvalues \cite{Won06,WonL13,GuoZ17,LLV20}, we can find that the main tools are Hoffman-Wielandt theorem \cite[pp.~368]{RogC94a} and Birkhoff's theorem \cite[pp.~527]{RogC94a}. With Lemma~\ref{Lem.Tana1}, Tanaka and Nakata showed that when the Ky Fan $p$-$k$ norm \cite[Ch.~3]{RogC94b} is used, the solution to \eqref{eq.Tana1} can be further simplified, and they also presented its analytical solutions with respect to spectral and trace norms \cite{TanaN14}. {\color{black}When the spectral decomposition of $S_n$ is available, Tanaka and Nakata  suggested a $O(p\log(p))$ binary search method to solve \eqref{eq.Tana1}. Motivated by the geometric idea given in \cite{Won06,WonL13}, we can show that the complexity can be further reduced, which was also considered in \cite{LLV20}. More importantly, apart from the searching procedure, we need to compute all the eigenvectors of $S_n$ to construct the final solution. Unfortunately, this can be infeasible in the high-dimensional data setting due to its heavy computational burden in computing the spectral decomposition of $S_n$}. However, the high-dimensional data setting imparts $S_n$ with a very spiked rank structure and it has at most $n\ll p$ nonzero eigenvalues. Thus it should be of interest to design some efficient numerical methods to solve \eqref{eq.covcd} by extensively exploiting the special structure of $S_n$.
\end{remark}

\section{Main results}
\label{sec.3}
\subsection{The solution of the C3MA problem}

According to our assumption on the C3MA problem \eqref{eq.covcd},  $X_n$ is of full-column rank and $p$ is much larger than $n$,  which ensures $S_n$ has very low rank and is singular. Thus $S_n$ can not be the feasible solution to \eqref{eq.covcd}. Taking the special structure of $S_n$ into consideration and with Lemma~\ref{Lem.2}, we present the solution of the C3MA problem \eqref{eq.covcd} in the following Theorem~\ref{Thm1} and its proof is given in Appendix.

\begin{theorem}
\label{Thm1}
Assume that $X_n$ is of full-column rank.
Let $S_n=U\hat{\Lambda} U^T$ be the spectral decomposition of $S_n$ with $\hat{\Lambda}=\mathrm{Diag}(\hat{\lambda}_1,\cdots,\hat{\lambda}_p)$ and $\hat{\lambda}_1\geq\cdots\geq\hat{\lambda}_n > \hat{\lambda}_{n+1}= \cdots\hat{\lambda}_{p}=0$. Then the solution to the C3MA problem \eqref{eq.covcd} is given by
\begin{eqnarray*}
  \hat{\Sigma}=U\Lambda^{*}U^T,
\end{eqnarray*}
where
\begin{eqnarray*}
  \Lambda^{*}=\mathrm{Diag}(\kappa_n\mu^{*},\cdots,\kappa_n\mu^{*}, \hat{\lambda}_{\alpha^{*}+1}, \cdots, \hat{\lambda}_{\beta^{*}-1}, \mu^{*},\cdots,\mu^{*}),
\end{eqnarray*}
\begin{eqnarray*}
  \mu^{*} = \frac{\mathop\kappa_{n}\sum_{i=1}^{\alpha^{*}}\hat{\lambda}_i+\sum_{i=\beta^{*}}^{n}\hat{\lambda}_i} {\alpha^{*}\kappa_{n}^2+p-\beta^{*}+1},
\end{eqnarray*}
and the optimal $\alpha^{*}$ and $\beta^{*}$ can be determined with about $O(n)$ operations.
\end{theorem}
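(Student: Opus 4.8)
The plan is to start from the reduced eigenvalue problem \eqref{eq.lem2} supplied by Lemma~\ref{Lem.2}, so that it suffices to characterise the optimal diagonal $\Lambda^{*}=\mathrm{Diag}(\lambda_1,\dots,\lambda_p)$. The first step is to reparametrise by the minimal eigenvalue $\mu=\min_i\lambda_i$: the two constraints $\min_i\lambda_i\ge 0$ and $\max_i\lambda_i\le\kappa_n\min_i\lambda_i$ become equivalent to requiring $\mu\ge 0$ together with the box constraint $\lambda_i\in[\mu,\kappa_n\mu]$ for every $i$. Since $X_n$ has full column rank, the trivial case $\Sigma=0$ is excluded and the optimal $\mu$ is strictly positive. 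The feasible set $\{(\lambda,\mu):\mu\ge 0,\ \mu\le\lambda_i\le\kappa_n\mu\}$ is convex and the objective $\sum_i(\lambda_i-\hat\lambda_i)^2$ is strictly convex in $\lambda$, which guarantees a unique minimiser and lets me solve the problem by an inner--outer splitting over $\lambda$ and $\mu$.

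Next, for a fixed admissible $\mu>0$ I would solve the inner minimisation in $\lambda$. Because the coordinates decouple, each optimal $\lambda_i$ is simply the Euclidean projection of $\hat\lambda_i$ onto $[\mu,\kappa_n\mu]$, i.e.\ $\lambda_i=\kappa_n\mu$ when $\hat\lambda_i>\kappa_n\mu$, $\lambda_i=\hat\lambda_i$ when $\mu\le\hat\lambda_i\le\kappa_n\mu$, and $\lambda_i=\mu$ when $\hat\lambda_i<\mu$. The crucial structural observation is that, since $\hat\lambda_1\ge\cdots\ge\hat\lambda_p$ are already sorted, this projection is monotone and therefore produces exactly the three-block pattern in the statement: indices $1,\dots,\alpha$ are pushed down to $\kappa_n\mu$, a middle block $\alpha+1,\dots,\beta-1$ is left untouched, and indices $\beta,\dots,p$ are pulled up to $\mu$, where $\alpha=\#\{i:\hat\lambda_i>\kappa_n\mu\}$ and $p-\beta+1=\#\{i:\hat\lambda_i<\mu\}$. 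Substituting this pattern back gives the reduced one-dimensional objective $g(\mu)=\sum_{i=1}^{\alpha}(\kappa_n\mu-\hat\lambda_i)^2+\sum_{i=\beta}^{p}(\mu-\hat\lambda_i)^2$, which is a convex, piecewise-quadratic function of $\mu$.

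The heart of the argument is then to minimise $g$. On each region where $(\alpha,\beta)$ stays constant, $g$ is a single quadratic, and setting $g'(\mu)=0$ yields the stationary value $\mu^{*}=\bigl(\kappa_n\sum_{i=1}^{\alpha}\hat\lambda_i+\sum_{i=\beta}^{p}\hat\lambda_i\bigr)\big/\bigl(\alpha\kappa_n^2+p-\beta+1\bigr)$; using $\hat\lambda_{n+1}=\cdots=\hat\lambda_p=0$ turns $\sum_{i=\beta}^p\hat\lambda_i$ into $\sum_{i=\beta}^n\hat\lambda_i$ and reproduces the claimed formula. What remains, and what I expect to be the main obstacle, is the fixed-point/consistency issue: the thresholds $\alpha,\beta$ determine $\mu^{*}$, while $\mu^{*}$ must in turn reproduce those thresholds through $\hat\lambda_{\alpha}\ge\kappa_n\mu^{*}\ge\hat\lambda_{\alpha+1}$ and $\hat\lambda_{\beta-1}\ge\mu^{*}\ge\hat\lambda_{\beta}$. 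I would resolve this by a monotonicity argument: as $\mu$ increases, $\alpha(\mu)$ is nonincreasing and $p-\beta(\mu)+1$ is nondecreasing, and convexity of $g$ forces the global minimiser to lie in the unique block whose stationary point satisfies these inequalities, thereby establishing existence and uniqueness of the admissible triple $(\alpha^{*},\beta^{*},\mu^{*})$.

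Finally, for the complexity claim I would exploit the spiked rank structure. Since $\mu^{*}>0$ while $\hat\lambda_i=0$ for $i>n$, all $p-n$ zero eigenvalues are always pulled up to $\mu^{*}$, so $\beta^{*}\le n+1$ and the only genuine candidate cut-points come from the $n$ nonzero eigenvalues $\hat\lambda_1,\dots,\hat\lambda_n$. Scanning these ordered candidates---or performing a search on them guided by the monotone consistency inequalities above---locates $\alpha^{*}$ and $\beta^{*}$ with about $O(n)$ operations, after which $\mu^{*}$ and $\Lambda^{*}$ follow in closed form, which is the content of the final assertion.
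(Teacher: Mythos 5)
Your proposal is correct and follows essentially the same route as the paper's own proof: reduce to eigenvalues via Lemma~\ref{Lem.2}, project the sorted $\hat{\lambda}_i$ onto $[\mu,\kappa_n\mu]$ for fixed $\mu$, minimize the resulting convex piecewise-quadratic univariate function whose per-region stationary point yields the stated formula for $\mu^{*}$, and resolve the consistency between $(\alpha,\beta)$ and $\mu^{*}$ by convexity and uniqueness. Your monotone scan over the at most $2n$ regions generated by the nonzero eigenvalues is the same $O(n)$ argument the paper phrases geometrically as a search along the line $\nu=\kappa_n\mu$ in the $\mu$-$\nu$ plane.
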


\begin{remark}
\label{rmk1}
{\color {black}
Tanaka and Nakata only gave the analytical solution of \eqref{eq.covcd} with respect to spectral norm and trace norm \cite{TanaN14}. Considering the same problem, Li et al. presented its solution with respect to the Frobenius norm \cite{LLV20}. The main contribution of \cite{LLV20} with respect to \eqref{eq.covcd} is to show that when the Frobenius norm is used the optimal eigenvalues of $\hat{\Sigma}$ can be found in about $O(p)$ operations instead of $O(p\log(p))$. However, both of these two works gave little attention to calculating the eigenvectors of $\hat{\Sigma}$, and we will show that in the high-dimensional data setting this can be achieved with some efficient matrix decomposition techniques in Section~\ref{subsec.algthm}.

Moreover, when $n$ is larger than $p$, $S_n$ may be invertible.  Following the proof given in Appendix, we can show that when $\hat{\lambda}_1/\hat{\lambda}_p\leq \kappa_n$, $S_n$ is the optimal solution to \eqref{eq.covcd}. When $\hat{\lambda}_1/\hat{\lambda}_p > \kappa_n$, the optimal solution is given by
\begin{eqnarray}
\label{eq.rk1}
  \hat{\Sigma}=U\Lambda^{*}U^T,
\end{eqnarray}
where $\Lambda^{*}=\mathrm{Diag}(\kappa_n\mu^{*},\cdots,\kappa_n\mu^{*}, \hat{\lambda}_{\alpha^{*}+1}, \cdots, \hat{\lambda}_{\beta^{*}-1}, \mu^{*},\cdots,\mu^{*})$,
\begin{eqnarray*}
\mu^{*} = \frac{\mathop\kappa_{n}\sum_{i=1}^{\alpha^{*}}\hat{\lambda}_i+\sum_{i=\beta^{*}}^{p}\hat{\lambda}_i} {\alpha^{*}\kappa_{n}^2+p-\beta^{*}+1},
\end{eqnarray*}
and the optimal $\alpha^{*}$ and $\beta^{*}$ can be determined with about $O(p)$ operations. We note that $S_n$ is only required to be symmetric in \cite{TanaN14,LLV20}, and if we restrict it to be positive definite, the optimal $\hat{\Sigma}$ given by \eqref{eq.rk1} can also be derived from \cite{TanaN14,LLV20}. But, just as we have emphasized, in addition to characterizing the solution to \eqref{eq.covcd}, we consider the efficient construction of eigenvectors of $\hat{\Sigma}$, which was not discussed in \cite{TanaN14,LLV20}. In addition, Theorem~\ref{Thm1} and equation \eqref{eq.rk1} also show that when $n\geq p$ the condition number of $\hat{\Sigma}$, i.e. $\kappa(\hat{\Sigma})$, can be smaller than $\kappa_n$, whereas in the high-dimensional data setting $\kappa(\hat{\Sigma})$ is always equal to $\kappa_n$.
}
\end{remark}

\subsection{Algorithm}
\label{subsec.algthm}
{\color{black}
The construction of the optimal solution to the C3MA problem \eqref{eq.covcd} should be divided into two parts: searching for the optimal $\alpha^{*}$ and $\beta^{*}$, and constructing the eigenvectors. In the proof of Theorem~\ref{Thm1}, we have shown that the optimal $\alpha^{*}$ and $\beta^{*}$ can be found in $O(n)$ operations, which is faster than the binary search strategy given in \cite{TanaN14}. Here, we need to point out that the original idea for searching $\alpha^{*}$ and $\beta^{*}$ in $O(n)$ operations was proposed in the technical report \cite{Won06} for solving condition number constrained maximum likelihood covariance matrix estimation. In addition, the geometric perspective on search path was also given in \cite{Won06} and \cite{WonL13}. Based on the geometric idea given in \cite{Won06, WonL13}, we can design an algorithm to find the optimal $\alpha^{*}$ and $\beta^{*}$ in Theorem~\ref{Thm1} with $O(n)$ operations. The same problem was recently addressed by \cite{LLV20} and an algorithm was also presented there \cite[Algorithm~2.1]{LLV20}. A straightforward comparison will show that the Algorithm 2.1 in \cite{LLV20} is actually adapted from the Algorithm 1 in \cite{Won06} by changing the expression of $\mu$. The Algorithm 2.1 in \cite{LLV20} may be directly used to find the optimal $\alpha^{*}$ and $\beta^{*}$ for the C3MA problem \eqref{eq.covcd} by setting the lower bound equal to zero, and to avoid replicative work we refer the interested readers to \cite{LLV20} for implementation details.

}

Now we present how to utilise the special structure of $S_n$ in the high-dimensional data setting to compute the eigenvectors of $\hat{\Sigma}$ with matrix decomposition techniques.
According to \eqref{eq.defSn}, the sample covariance matrix $S_n$ is positive semidefinite in the high-dimensional data setting.
As we have assumed that $X_n$ is of full-column rank, we obtain $\rank(S_n)=n$ and the spectral decomposition of $S_n$ is given by
\begin{equation}
\label{eq.simEigSn}
  S_n=U\begin{bmatrix}
        \hat{\Lambda}_1 & 0 \\
        0  & 0\\
         \end{bmatrix} U^T,
\end{equation}
where $\hat{\Lambda}_1=\mathrm{Diag}(\hat{\lambda}_1, \hat{\lambda}_2, \cdots,\hat{\lambda}_n)$ with $\hat{\lambda}_1\geq\hat{\lambda}_2 \geq \cdots \geq \hat{\lambda}_n>0$, $U\in \mathbb{R}^{p\times p}$, and $UU^T=I_p$.
Since $\rank(X_n)=n$, the singular value decomposition (SVD) of ${1}/{\sqrt{n}}X_n$ can be written as follows
\begin{eqnarray*}
  \frac{1}{\sqrt{n}}X_n=U\hat{\Delta} V^T,
\end{eqnarray*}
where $V\in \mathbb{R}^{n\times n}$, $VV^T=I_n$, and $\hat{\Delta}=\begin{bmatrix}
                                                               \hat{\Delta}_1 \\
                                                               0 \\
                                                             \end{bmatrix}$ with
$\hat{\Delta}_1=\mathrm{Diag}(\hat{\delta}_1, \hat{\delta}_2, \cdots,\hat{\delta}_n)\in \mathbb{R}^{n\times n}$ and $\hat{\delta}_1\geq \hat{\delta}_2\geq \cdots \geq \hat{\delta}_n>0$  \cite[Ch.~8]{Golub13}. Then
\begin{eqnarray*}
  S_n=\frac{1}{{n}}X_nX_n^T=U\begin{bmatrix}
                           \hat{\Delta}_1 \\
                           0 \\
                         \end{bmatrix}
  V^TV\begin{bmatrix}
        \hat{\Delta}_1 & 0 \\
      \end{bmatrix}U^T=U\begin{bmatrix}
                          \hat{\Delta}_1^2 & 0 \\
                          0 & 0 \\
                        \end{bmatrix}
      U^T,
\end{eqnarray*}
which is exactly the spectral decomposition \eqref{eq.simEigSn} with $\hat{\Lambda}_1=\hat{\Delta}_1^2$.

We note that in \cite{TanaN14} the full spectral decomposition was suggested to find the optimal solution to \eqref{eq.covcd}. However, in the high-dimensional data setting the computation of full spectral decomposition of $S_n$ may be very time-consuming, which requires about $O(9p^3)$ operations when the symmetric QR algorithm is used. The preceding paragraph shows that if the left singular vectors $U$ and singular values $\hat{\Delta}_1$ of ${1}/{\sqrt{n}}X_n$ are available, the optimal solution to \eqref{eq.covcd} can also be derived. Besides, if the SVD is employed, its computational complexity is about $O(4p^2n - 8pn^2)$ when $U$ and $\hat{\Delta}$ are needed and computed by Golub-Reinsch SVD (GR-SVD) algorithm \cite{Golub13}. When $p$ is much larger than $n$, Chan showed that the computational efficiency of GR-SVD can be further improved by combining the QR factorization and GR-SVD algorithm, which was called the MOD-SVD \cite{Chan82}. Thus, with respect to computational complexity,  using SVD can lead to a significant reduction of computational burden compared with the full spectral decomposition in the high-dimensional data setting.

Therefore, we employ the MOD-SVD to compute the left singular vectors $U$ and singular values $\hat{\Delta}_1$ of ${1}/{\sqrt{n}}X_n$. Specifically, we first compute the QR factorization of ${1}/{\sqrt{n}}X_n$ with Householder transformation
\begin{eqnarray*}
  \frac{1}{\sqrt{n}}X_n = Q\begin{bmatrix}
                             R \\
                             0 \\
                           \end{bmatrix},
\end{eqnarray*}
where $Q\in \mathbb{R}^{p\times p}$, $QQ^T=I_p$ and $R\in \mathbb{R}^{n\times n}$ is an upper triangular matrix, and then the SVD of $R$ is computed by GR-SVD algorithm
\begin{eqnarray*}
  R = U_1\hat{\Delta}_1V^T,
\end{eqnarray*}
where $U_1\in \mathbb{R}^{n\times n}$ and $U_1U_1^T=I_n$. Finally, the SVD of ${1}/{\sqrt{n}}X_n$ is given by
\begin{eqnarray*}
  \frac{1}{\sqrt{n}}X_n = Q\begin{bmatrix}
                             U_1 & 0 \\
                             0 & I_{p-n} \\
                           \end{bmatrix}
\begin{bmatrix}
  \hat{\Delta}_1 \\
  0 \\
\end{bmatrix}V^T.
\end{eqnarray*}
A detailed comparison of computational complexity and storage requirement between GR-SVD and MOD-SVD can  be found in \cite{Chan82}. We would like to apply these two different algorithms to compute the SVD of the factor ${1}/{\sqrt{n}}X_n$ in the present paper, and a numerical comparison will be given in Example~\ref{exm1}.

Summarizing the above discussion, we present the following three algorithms for solving the C3MA problem \eqref{eq.covcd}. The first one is based on full spectral (FU-SPT) decomposition of $S_n$, which was suggested in \cite{TanaN14}. The other two are proposed algorithms based on the GR-SVD and MOD-SVD, respectively.
\begin{algorithm}[htbp]
{\small\caption{FU-SPT based solver for the C3MA problem }\label{Alg1}
{\textbf{Input:}  upper bound of the condition number $\kappa_n$, and data matrix $X_n$ .}\\
{\textbf{Output:} optimal approximation $\hat{\Sigma}$.}
\begin{enumerate}
  \item compute $S_n=\frac{1}{n}X_nX_n^T$.
  \item compute the full spectral decomposition $S_n$,
  \begin{equation*}
   S_n=U\hat{\Lambda} U^T,
  \end{equation*} and output $U$ and $\hat{\Lambda}$.
  \item find the optimal $u^{*}$ with $\hat{\Lambda}$, and give the optimal $\Lambda^{*}$ similar to Theorem~\ref{Thm1}.
  \item construct $\hat{\Sigma}$ by
\begin{equation*}
  \hat{\Sigma}=U\Lambda^{*}U^T.
\end{equation*}
\end{enumerate}}
\end{algorithm}

\begin{algorithm}[htbp]
{\small\caption{GR-SVD based solver for the C3MA problem }\label{Alg2}
{\textbf{Input:}  upper bound of the condition number $\kappa_n$, and data matrix $X_n$ .}\\
{\textbf{Output:} optimal approximation $\hat{\Sigma}$.}
\begin{enumerate}
  \item compute the SVD of ${1}/{\sqrt{n}}X_n$,
  \begin{equation*}
    \frac{1}{\sqrt{n}}X_n=U\begin{bmatrix}
                           \hat{\Delta}_1 \\
                           0 \\
                         \end{bmatrix}V^T,
  \end{equation*}
 and only output $U$ and $\hat{\Delta}_1$.
  \item find the optimal $u^{*}$ with $\hat{\Delta}_1^2$, and give the optimal $\Lambda^{*}$ by Theorem~\ref{Thm1}.
  \item construct $\hat{\Sigma}$ by
\begin{equation*}
  \hat{\Sigma}=U{\Lambda}^{*}U^T.
\end{equation*}
\end{enumerate}}
\end{algorithm}

\begin{algorithm}[htbp]
{\small \caption{MOD-SVD based solver for the C3MA problem }\label{Alg3}
{\textbf{Input:}  upper bound of the condition number $\kappa_n$, and data matrix $X_n$ .}\\
{\textbf{Output:} optimal approximation $\hat{\Sigma}$.}
\begin{enumerate}
  \item compute the QR factorization of ${1}/{\sqrt{n}}X_n$,
  \begin{equation*}
    \frac{1}{\sqrt{n}}X_n=Q\begin{bmatrix}
                           R \\
                           0 \\
                         \end{bmatrix},
  \end{equation*} and output $Q$ and $R$.
  \item compute the SVD of $R$,
  \begin{equation*}
    R=U_1\hat{\Delta}_1V^T,
  \end{equation*} and only output $U_1$ and $\hat{\Delta}_1$.
  \item find the optimal $u^{*}$ with $\hat{\Delta}_1^2$, and give the optimal $\Lambda^{*}$ by Theorem~\ref{Thm1}.
  \item construct $\hat{\Sigma}$ by
\begin{equation*}
  \hat{\Sigma}=Q\begin{bmatrix}
                  U_1 & 0 \\
                  0 & I_{p-n} \\
                 \end{bmatrix}{\Lambda}^{*}
\begin{bmatrix}
   U_1^T & 0 \\
   0 & I_{p-n} \\
 \end{bmatrix}Q^T.
\end{equation*}
\end{enumerate}}
\end{algorithm}

\begin{remark}
\label{rmk2}
{\color{black}The high-dimensional data setting ensures the sample covariance matrix $S_n$ has this special structure that allows us to use the SVD to construct the desired eigenvectors. But, if the factor structure is not available or $n$ is approximately equal to $p$, the proposed method will not be applicable or save more computational burden. Moreover, it is also the high-dimensional data setting that makes the MOD-SVD enjoy computational superiority over GR-SVD. Chan \cite{Chan82} showed that when $p\gtrsim 2n$ the MOD-SVD can achieve as much as 50 percent savings compared with the GR-SVD algorithm, and this has been incorporated into LINPACK \cite{EBai}. Based on the LINPACK routines, the MATLAB function \texttt{svd()} was built to compute the SVD of a matrix. However, as an interesting finding, we note that when the data matrix is large, the MOD-SVD should be introduced earlier, that is, when $p/n>1$ the MOD-SVD should also be used, not necessarily until $p/n\approx 2$. We will show this in our numerical Example~\ref{exm1}.}
\end{remark}

\subsection{{\color{black}Further discussion on $\kappa_n$}}
\label{subsec33}
Theorem~\ref{Thm1} shows that $\kappa_n$ is exactly the condition number of $\hat{\Sigma}$ and also determines the truncation positions of the eigenvalues of $S_n$ through the optimal $\mu^{*}$. In this part, we present some interesting discussion on $\kappa_n$, which may be of interest to be taken as future research.

\subsubsection{The influence of $\kappa_n$ on truncation process}
Define the univariate functions $\mu=\mu(\kappa_n)$ and $\nu=\nu(\kappa_n)=\kappa_n\mu$, then we can easily find that $\mu$ and $\nu$ are continuous functions on $R_{\alpha,\beta}$ given in the Appendix. Li et al. \cite{LLV20} also showed that the trajectory path for $\mu$ and $\nu$ is continuous on $\mu$-$\nu$-plane. $\mu$ and $\nu$ are used to cut the eigenvalues of $S_n$ from opposite sides. Intuitively, the distance between $\mu$ and $\nu$ should monotonically vary with $\kappa_n$, that is,  larger $\kappa_n$ should lead to a wider range between $\mu$ and $\nu$. This is true for condition number constrained maximum likelihood estimation of covariance matrix \cite{WonL13}.
Won et al. \cite{WonL13} showed that when the log-likelihood function is used to estimate the covariance matrix, it can be proved that $\mu$ is non-increasing in $\kappa_n$ and $\nu$ is non-decreasing, and both relationships hold almost surely. However, when the loss function is defined by \eqref{eq.covcd}, the monotone properties of $\mu$ and $\nu$, parallel to \cite[Proposition~1]{WonL13}, will not hold any more. In the following, we try to give some theoretical exploration to show that for the C3MA problem \eqref{eq.covcd} both $\mu$ and $\nu$ are not monotone functions of $\kappa_n$, and its numerical verification will be given in Example~\ref{exm2}.

For a given $\kappa_{n_0}$, let $\alpha$ be the largest index such that $\hat{\lambda}_{\alpha}>\nu(\kappa_{n_0})$, $\beta$ the smallest index such that $\mu(\kappa_{n_0})>\hat{\lambda}_{\beta}$, and the following inequalities hold
\begin{eqnarray*}
  \hat{\lambda}_{\alpha}>\nu(\kappa_{n_0}) &> \hat{\lambda}_{\alpha+1} \quad\mathrm{ and }\quad
  \hat{\lambda}_{\beta-1}> \mu(\kappa_{n_0})>\hat{\lambda}_{\beta}.
\end{eqnarray*}
Then we can find some $\kappa_n$ in a small neighbourhood of $\kappa_{n_0}$ such that
\begin{eqnarray*}
  \hat{\lambda}_{\alpha}>\nu(\kappa_n) &> \hat{\lambda}_{\alpha+1} \quad\mathrm{ and }\quad
  \hat{\lambda}_{\beta-1}> \mu(\kappa_n)>\hat{\lambda}_{\beta}.
\end{eqnarray*}
With basic calculus techniques, we can show that both
\begin{eqnarray*}
  \mu(\kappa_n) = \frac{\mathop\kappa_n\sum_{i=1}^{\alpha}\hat{\lambda}_i+\sum_{i=\beta}^{n}\hat{\lambda}_i} {\alpha\kappa_n^2+p-\beta+1} \; \; \mathrm{and} \;\;  \nu(\kappa_n) = \frac{\mathop\kappa_n^2\sum_{i=1}^{\alpha}\hat{\lambda}_i+\kappa_n\sum_{i=\beta}^{n}\hat{\lambda}_i} {\alpha\kappa_n^2+p-\beta+1}
\end{eqnarray*}
are not monotone functions of $\kappa_n\in[1, \infty)$, and its maximizers are given by
\begin{eqnarray*}
  \kappa_{\mu} = \max \left\{\sqrt{\left(\frac{\sum_{i=\beta}^{n}\hat{\lambda}_i}{\sum_{i=1}^{\alpha}\hat{\lambda}_i}\right)^2 + \frac{p-\beta+1}{\alpha}} - \frac{\sum_{i=\beta}^{n}\hat{\lambda}_i}{\sum_{i=1}^{\alpha}\hat{\lambda}_i}, 1\right\},
\end{eqnarray*}
and
\begin{eqnarray*}
  \kappa_{\nu} = \frac{p-\beta+1}{\alpha} \frac{\sum_{i=1}^{\alpha}\hat{\lambda}_i}{\sum_{i=\beta}^{n}\hat{\lambda}_i}+ \sqrt{\left(\frac{p-\beta+1}{\alpha} \frac{\sum_{i=1}^{\alpha}\hat{\lambda}_i}{\sum_{i=\beta}^{n}\hat{\lambda}_i} \right)^2 + \frac{p-\beta+1}{\alpha}},
\end{eqnarray*}
respectively. Therefore, we can not determine which is larger for $\mu(\kappa_{n_0})$ and $\mu(\kappa_n)$ or $\nu(\kappa_{n_0})$ and $\nu(\kappa_n)$, and the monotone properties parallel to \cite{WonL13} can not be established for the C3MA problem \eqref{eq.covcd}.

We can note that both $\mu$ and $\nu$ are unimodal functions.  If  $\gamma_1={\sum_{i=1}^{\alpha}\hat{\lambda}_i}/{\sum_{i=\beta}^{n}\hat{\lambda}_i}$ is large and also much larger than $\gamma_2={(p-\beta+1)}/{\alpha}$, which often empirically holds in the high-dimensional data setting according to our numerical experiments (see Example~\ref{exm2}),  then we have $\kappa_{\mu}=O(\sqrt{\gamma_2})$ and $\kappa_{\nu}=O(\gamma_1\gamma_2)$, and for $\kappa_n \geq \kappa_{n_0}$ in $[\kappa_{\mu}, \kappa_{\nu}]$ the following inequalities hold
\begin{eqnarray*}
  \mu(\kappa_{n_0}) \geq \mu(\kappa_n)\; \; \mathrm{ and }\; \; \nu(\kappa_{n_0}) \leq \nu(\kappa_n).
\end{eqnarray*}
The above inequalities show that the monotone properties corresponding to \cite[Proposition~1]{WonL13} may be established in a limited interval. But, strictly speaking, a theoretical justification of the assumptions on the truncated eigenvalues of $S_n$ is much more difficult. The discussion can only be taken as an illustration to show the possibility of establishing monotone properties of $\mu$ and $\nu$, and should not be taken as rigorous proof.

\subsubsection{The selection of $\kappa_n$}
The condition number of covariance matrix plays an important rule in studying the numerical stability of multivariate statistical models. For example, in binary classification with unknown but similar covariance structure, the discriminant vector $d$ is given by solving the positive definite linear system
\begin{eqnarray*}
  \hat{\Sigma} d =\bar{x}-\bar{y},
\end{eqnarray*}
where $\hat{\Sigma}$ is the estimated covariance structure, and $\bar{x}$ and $\bar{y}$ are the sample mean vectors of two different classes \cite{Ande03}. If we set $\Delta \Sigma$ be a small perturbation to $\hat{\Sigma}$ and its spectral norm $\|\Delta \Sigma\|_2<\epsilon$, then the standard perturbation result \cite[Ch.~2]{Golub13} shows that
\begin{eqnarray*}
  \frac{\|\hat{d}-d\|_2}{\|d\|_2}\leq \kappa(\hat{\Sigma}) \frac{\|\Delta \Sigma\|_2}{\| \hat{\Sigma}\|_2} +O(\epsilon^2),
\end{eqnarray*}
which means that when $\kappa(\hat{\Sigma})$ is large the relative error between the computed discriminant vector $\hat{d}$ and $d$ can also be very large. Another example is from portfolio management, which also stimulates this work. In investigating the perturbation theory of classical Markowitz mean-variance model \cite{Mkwz52}
\begin{eqnarray}\label{MKmodel}
  \min_{x\in\mathbb{R}^{p}}\frac{1}{2} x^T\Sigma x-u^Tx\quad \textrm{subject to }\mathbf{1}^Tx=1,
\end{eqnarray}
where $\Sigma$ is the covariance matrix of different investments, $u$ is the expected return vector, and $\mathbf{1}$ is a vector with its entries equal to $1$, we find that the rigorous or first order relative perturbation bounds of \eqref{MKmodel} can be unified into the following form
\begin{eqnarray*}
  \frac{\|\hat{x}-x\|_2}{\|x\|_2}\leq\kappa(\Sigma)f\left(\|\Delta\Sigma\|_2,\|\Delta u\|_2\right),
\end{eqnarray*}
where $\hat{x}$ is the computed solution, and $f$ is a function of perturbations $\Delta \Sigma$ and $\Delta u$ to $\Sigma$ and $u$, respectively \cite[Section~3]{WangY19}. Both of these two examples show that when the underlying true covariance matrix is unavailable, it should be important to take the condition number of covariance matrix into consideration in approximating the covariance matrix.

The above discussion also presents some inspiration in selecting $\kappa_n$ for the C3MA problem, that is, in order to get a small relative solution error, we may employ the perturbation results of applied models and give a suitable reduction to $\kappa_n$ in approximating the covariance matrix through \eqref{eq.covcd}. However, this user-chosen manner may seem to be a little artificial, and a data-driven method may give better reflection of its numerical essence and be more convincing. But $S_n$ is singular in the high-dimensional data setting and cannot be directly used to estimate the largest and smallest eigenvalues of $\Sigma$. According to Section~\ref{subsec.algthm}, a natural question is whether we can use the extreme singular values of ${1}/{\sqrt{n}}X_n$ to determine $\kappa_n$? The answer is still pessimistic, when $p$ is much larger than $n$. Let the elements of $X_n$ be independent standard normal random variables, then the expectation of the smallest singular value $\hat{\delta}_n$ of ${1}/{\sqrt{n}}X_n$ satisfies $\mathbf{E}\hat{\delta}_n\geq \sqrt{{p}/{n}}-1$ \cite{DaSz01}, which shows that it is the ratio $p/n$ that controls the smallest singular value of ${1}/{\sqrt{n}}X_n$, even for well-conditioned underlying covariance matrix. Some statistical methodologies were proposed to estimate the spectrum of random matrices, but these methods can not give reliable estimates of the extreme eigenvalues in the high-dimensional data setting \cite{ElK08,HenS09}. Thus some theoretical guaranteed data-driven procedures for selecting $\kappa_n$ in the high-dimensional data setting are still required and should be treated as future research work.

According to the condition number estimation theory, the accurate estimation of condition number is usually not required, and an estimate of the condition number within a factor 10 is usually acceptable \cite[Ch.~15]{High02a}. Thus for practical use we may consider the user-chosen $\kappa_n$ in \eqref{eq.covcd}. The idea for user-chosen parameter has been widely used in high-dimensional covariance matrix estimation. For example, to ensure the positive definiteness of covariance matrix, a common technique is to force its minimum eigenvalue larger than some given constant \cite{XueM12,LiuW14}. However, we found that bounding the minimum eigenvalue may lead to ill-conditioned covariance matrix estimation, when $p$ is much larger than $n$ \cite{WangY19}.  Thus considering numerical stability, it is more appropriate to bound the condition number, which guarantees both the positive definiteness and numerical stability of covariance matrix estimates. In practical applications, $\kappa_n$ may be chosen from the interval $[10^{4},10^6]$ , if the priori information on $\kappa_n$ is unavailable \cite{TanaN14}.

%

\section{Numerical experiment}
\label{sec.4}
In this part, {\color{black}we will present some numerical examples to show that the SVD-based algorithms enjoy much higher computational efficiency compared with the full spectral decomposition based method in the high-dimensional data setting. }Some numerical experiments are also given to illustrate the discussions given in Section~\ref{subsec33}. All the computations are performed in MATLAB R2014a on a PC with 4 GB RAM and Intel Core i5-6600 CPU running at 3.30 GHz.

\begin{example}
\label{exm1}
In this example, we will give a comparison of the three algorithms given in Section~\ref{subsec.algthm} with respect to different settings. The data is generated from multivariate normal distribution $\mathcal{N}(0,I_p)$ with sample size $n$, and we set $\kappa_n=10^3$ in all experiments for simplicity. For each pair of $p$ and $n$, we repeat the numerical experiment 100 times and report the mean values of the CPU time in seconds. We first give a comparison of Algorithms~\ref{Alg2} and \ref{Alg3} to verify the superiority of MOD-SVD. The  numerical results are reported in Table~\ref{Table1}, from which we can see that as the ratio $p/n$ increases the MOD-SVD based method becomes more and more efficient. This coincides with the conclusion given in \cite{Chan82}.

\begin{table}[htp]
  \centering
  {\small
  \caption{CPU time comparison of Algorithms~\ref{Alg2} and \ref{Alg3}. }\label{Table1}
  \begin{tabular}{|l|r|r|r|r|r|}
    \hline
    $n=100$& $p=150$ & $p=200$ & $p=250$ & $p=300$ & $p=350$ \\
    \hline
    GR-SVD  & 0.0316  & 0.0473 & 0.0826 & 0.1292 & 0.2061  \\
    MOD-SVD & 0.0215  & 0.0214 & 0.0224 & 0.0246 & 0.0259  \\
    \hline
  \end{tabular}}
\end{table}

In practical applications, the use of built-in function provided by data analysis software will substantially improve the performance of the proposed algorithm. To compare the three algorithms given in Section~\ref{subsec.algthm}, we employ the MATLAB functions \texttt{eig()}, \texttt{qr()}, and \texttt{svd()} in the following computations, and the numerical results are reported in Table~\ref{Table2}.  According to Table~\ref{Table2}, in the high-dimensional data setting the SVD based methods outperform the full spectral decomposition based method. However, when $p/n\gtrsim 2$, the first part of Table~\ref{Table2} shows that the GR-SVD based algorithm outperforms the MOD-SVD based algorithm that directly combines the function \texttt{qr()} and \texttt{svd()}. This is because the QR factorization step has been incorporated into the well programmed \texttt{svd()} function when $p/n\gtrsim 2$, which has been discussed in Remark~\ref{rmk2}.

\begin{table}[htp]
  \centering
  {\small
  \caption{CPU time comparison of the three algorithms. }\label{Table2}
  \begin{tabular}{|l|r|r|r|r|r|}
    \hline
    $n=500$ & $p=500$ & $p=1000$ & $p=2000$ & $p=3000$ & $p=4000$ \\
    \hline
    FU-SPT  & 0.0694 & 0.4319 & 4.9537 & 17.2565& 37.3999 \\
    GR-SVD  & 0.0692 & 0.1654 & 0.5851 & 1.5105 & 2.8391 \\
    MOD-SVD & 0.0710 & 0.1614 & 0.7262 & 2.0560 & 3.9470 \\
    \hline
    $n=2000$& $p=2500$ & $p=3000$ & $p=3500$ & $p=4000$ & $p=4500$ \\
    \hline
    FU-SPT  & 10.1859 & 16.6683 & 24.7184 & 37.5496 & 49.3076 \\
    GR-SVD  & 8.3104  & 10.4214 & 7.8683  & 9.4778  & 9.8960 \\
    MOD-SVD & 6.3723  & 7.0242  & 7.4115  & 9.2190  & 10.1207 \\
    \hline
  \end{tabular}}
\end{table}

As an interesting finding, we note that, when $n=2000$ and $p/n<2$, the MOD-SVD based algorithm outperforms the GR-SVD based one. For example, when $n=2000$ and $p=3000$ the MOD-SVD based algorithm can achieve even more than 30 percent savings compared with the GR-SVD based one. Therefore, we suggest that when the data set is large, even if $p$ is only slightly larger than $n$ the QR factorization step should also be incorporated into the \texttt{svd()} function.
\end{example}

\begin{example}
\label{exm2}
{\color {black}
In this example, we mainly consider the truncation process of $\kappa_n$, which should be treated as numerical complement to the discussions given in Section~\ref{subsec33}. The data is generated from the multivariate normal distribution $\mathcal{N}(0,\Sigma)$. The underlying true covariance matrix $\Sigma$ is constructed as follows
\begin{eqnarray*}
\Sigma=U\Lambda U^T,
\end{eqnarray*}
where $U$ is a random unitary matrix, and $\Lambda$ is a diagonal matrix with entries equally distributed from $10^i$ to $10^{-i}$. Thus the condition number of true covariance matrix is $\kappa(\Sigma)=10^{2i}$.

To investigate the truncation process of $\kappa_n$, we set $i=3$ equivalent to $\kappa(\Sigma)=10^6$, and increase $\kappa_n$ from $10$ to $10^4$ to check how the extreme eigenvalues of $S_n$ are truncated. We report the numerical results in Figure~\ref{Fig2}.  We note that when $\kappa_n=10$ the top panels of Figure~\ref{Fig2} show that both the larger and smaller eigenvalues of $S_n$ are truncated. However, as $\kappa_n$ increases, model \eqref{eq.covcd} tends to preserve the larger eigenvalues and the smaller ones are more likely to be truncated. When $\kappa_n=10^4$, only the smaller eigenvalues are truncated, this provides some support for why some high-dimensional covariance matrix estimation procedures would like to bound the eigenvalue from below \cite{XueM12,LiuW14}. For ease of identification of the truncation positions for $\kappa_n=10^2$ and $10^4$, the eigenvalues in the middle and bottom panels of Figure~\ref{Fig2} are given in logarithmic scale. Moreover, we find that as $\kappa_n$ increases the distance between two truncation positions becomes larger and larger.

\begin{figure}[htp]
  \centering
  \includegraphics[width=1\textwidth,height=0.8\textwidth]{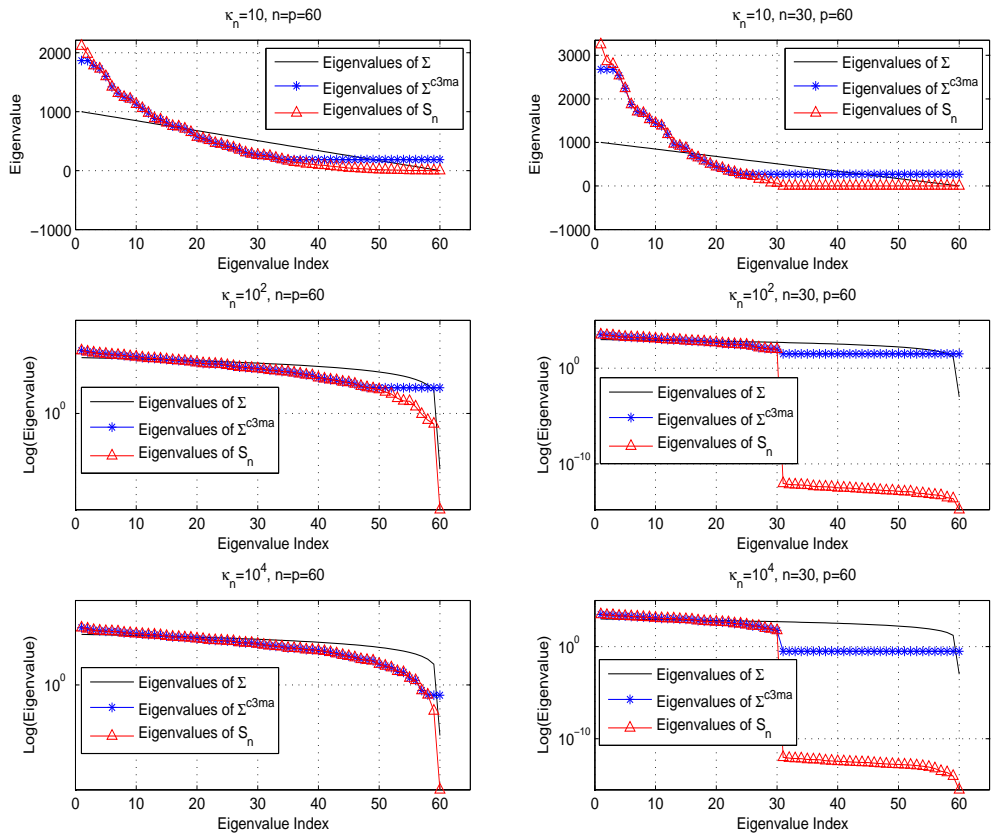}\\
  \caption{The truncation process of the C3MA model \eqref{eq.covcd} for $\kappa(\Sigma)=10^6$ with varying $\kappa_n$, $n$ and $p$. The black line is the true eigenvalue of $\Sigma$, the red triangles are the eigenvalues of $S_n$, and the blue stars are the eigenvalues of $\hat{\Sigma}$ given by \eqref{eq.covcd} and denoted by $\Sigma^{\mathrm{c3ma}}$.}\label{Fig2}
\end{figure}

In Section~\ref{subsec33}, we argue that both $\nu$ and $\mu$ are not monotone functions of $\kappa_n$, which is equivalent to say that as $\kappa_n$ increases both $\alpha$ and $\beta$ do not vary monotonically. For a better understanding, we design the following experiment.  Let $\kappa_n$ increase from 1 to some large value by a fixed step length, then for $\kappa_{n(i)}$ and $\kappa_{n(i+1)}$ we compute the corresponding successive differences $D_i(\beta)=\beta_{i+1}-\beta_{i}$ and $D_i(\alpha)=\alpha_{i+1}-\alpha_{i}$. To show the nonmonotone properties of $\nu$ and $\mu$, it is sufficient to check whether or not there exist negative values for $D_i(\beta)$ or positive values for $D_i(\alpha)$, respectively. In our experiment, we set $\kappa(\Sigma)=10^2$, increase $\kappa_n$ from 1 to 15 by step length 0.2,  plot the successive differences $D_i(\alpha)$ and $D_i(\beta)$ in Figure~\ref{Fig3}, and the negative successive difference for $\beta$ appears. This numerically verifies that the monotone property of $\mu$ or $\nu$ can not be established on the interval $[1,+\infty)$ with respect to $\kappa_n$.
\begin{figure}[htp]
  \centering
  \includegraphics[width=1\textwidth,height=0.5\textwidth]{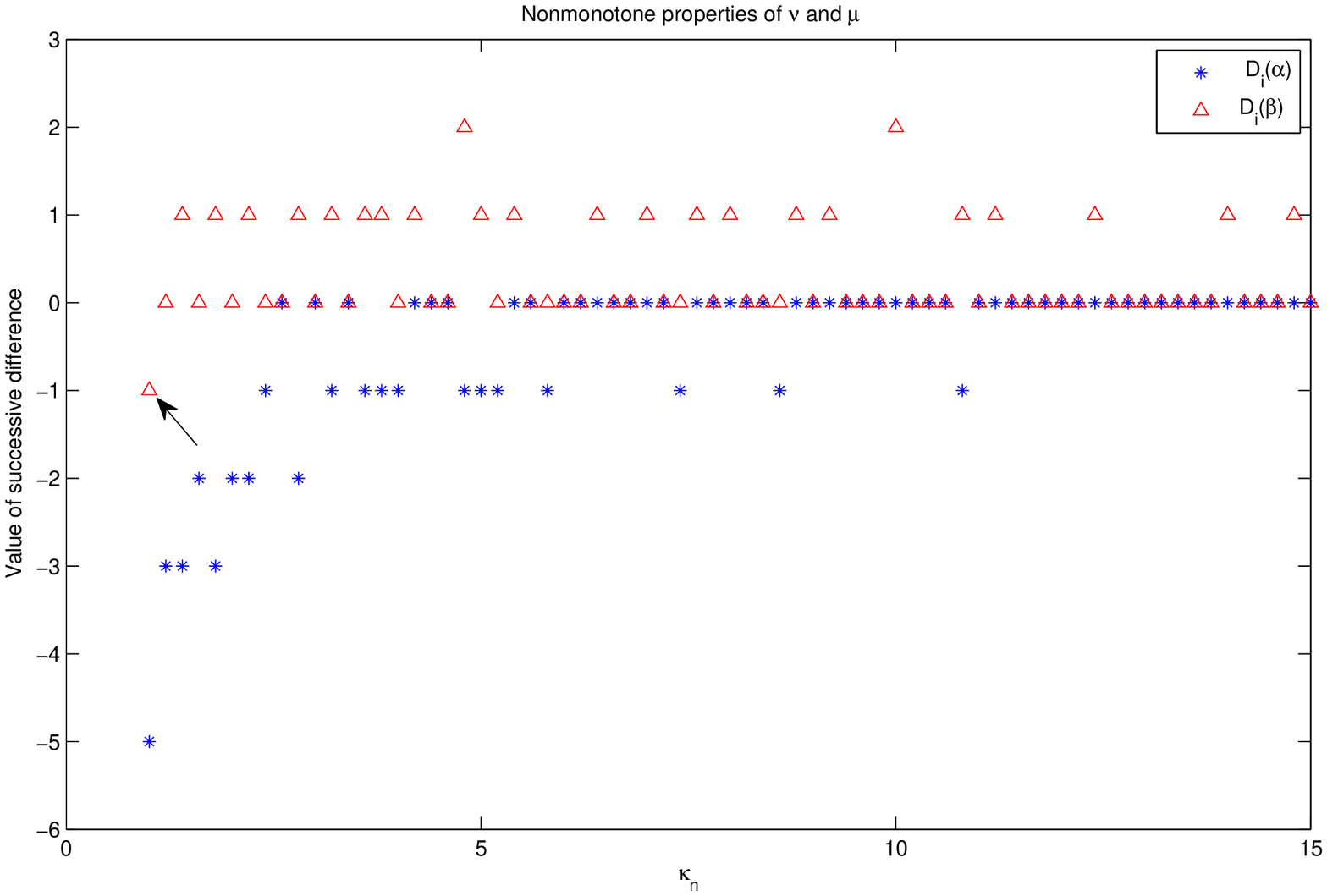}\\
  \caption{The nonmonotone properties of $\mu$ and $\nu$ with $\kappa(\Sigma)=10^2$ and $n=p=100$.}\label{Fig3}
\end{figure}

Despite monotone properties of truncation functions, we also want to check that under different regularization levels whether or not $\kappa_n$ is contained in the interval $[\kappa_{\mu}, \kappa_{\nu}]$. To show this, we set $\kappa(\Sigma)=10^4$ and repeat the numerical experiment 1000 times for each setting. For $\alpha$ and $\beta$, we report its smallest and largest values, and the percentages of $\kappa_n$ contained in $[\kappa_{\mu}, \kappa_{\nu}]$, denoted by IN, are also given.  The numerical results are reported in Table~\ref{Table3}. From Table~\ref{Table3}, we can easily note that although as $\kappa_n$ increases the average range between $\alpha$ and $\beta$ becomes wider, the smaller eigenvalues always tend to be truncated compared with the larger ones. This also coincides with Figure~\ref{Fig2}.
\begin{table}[htp]
  \centering
  \small
  \caption{The distribution of truncated eigenvalues}\label{Table3}
  \begin{tabular}{|l|l|c|c|c|c|}
    \hline
          &   & $\kappa_n=10$ & $\kappa_n=10^2$ & $\kappa_n=10^3$ & $\kappa_n=10^4$ \\
    \hline
  $n=p=400$& $\alpha$  & [10, 14]   & [1, 2]     & [1, 1]     & [1, 1]  \\
           & $\beta$   & [218, 224] & [322, 328] & [370, 375] & [389, 393]  \\
           & IN        & 100\%      & 100\%      & 100\%      & 100\% \\
    \hline
$n=200, p=400$ & $\alpha$  & [2, 6]     & [1, 1]     & [1, 1]     & [1, 1]  \\
               & $\beta$   & [150, 157] & [200, 200] & [200, 200] & [200, 200]  \\
               & IN        & 100\%      & 100\%      & 100\%     & 100\% \\
    \hline
  \end{tabular}
\end{table}
Moreover, in our simulation all $\kappa_n$s are contained in the interval $[\kappa_{\mu}, \kappa_{\nu}]$, and this may give some numerical support to the discussion given in Section~\ref{subsec33}. When $p$ is much larger than $n$ and the regularization level is not very high, the leading $n$ eigenvalues of $S_n$ are all reserved, which to some extent may explain that why we can not directly use the singular values of ${1}/{\sqrt{n}}X_n$ to determine $\kappa_n$ in the high-dimensional data setting.
}
\end{example}

\section{Concluding remark}
\label{sec.5}

In this paper, we gave a detailed investigation on the C3MA problem \eqref{eq.covcd} in the high-dimensional data setting and presented its explicit solution with respect to the Frobenius norm. By exploring the special structure of the high-dimensional data matrix, efficient SVD-based numerical algorithms were proposed to solve \eqref{eq.covcd}. Our numerical experiments showed that the proposed algorithms are quite efficient. It should be noted that it is the spiked rank structure of the data matrix that leads us to use SVDs to improve the performance of our algorithms. If the factor structure of the sample covariance matrix is not given, the proposed method in this paper will not be applicable. Note that the big data matrix are usually well approximated by low rank matrices \cite{UdeT19}. It would be of future interest to consider how the proposed method could be applied to symmetric and low rank data matrices $S_n$, where the factor structure is not available.

\section*{ Acknowledgement}
The author thank the editor, an associate editor and three anonymous reviewers for their helpful and detailed comments that generated a much better presentation of their work. Especially, the author also very appreciate one of the reviewers for his/her help in improving the proof of the main result.

\section*{Appendix}

\begin{proof}[Proof of Theorem~\ref{Thm1}]
Let $\lambda_1, \lambda_2, \cdots, \lambda_p$ be the eigenvalues of $\Sigma$  and satisfy $\lambda_1\geq \cdots\geq \lambda_p>0$. With Lemma~\ref{Lem.2}, problem \eqref{eq.lem2} amounts to
\begin{equation}
\label{eq.LM1}
 \Lambda^{*}= \mathop\mathrm{argmin}_{\exists \mu>0, 0<\mu\leq\lambda_p\leq\cdots\leq\lambda_1\leq\kappa_{n}\mu}\sum^{p}_{i=1}(\lambda_i-\hat{\lambda}_i)^2.
\end{equation}
If we define $f_i(\lambda_i,\hat{\lambda}_i)=(\lambda_i-\hat{\lambda}_i)^2$ with $i=1,\cdots, p$, then for a fixed $\mu$ the minimizer of $f_i(\lambda_i,\hat{\lambda}_i)$ is given by
\begin{eqnarray}
\label{eq.LM31}
 \lambda^{*}_i(\mu)&=& \mathop\mathrm{argmin}_{\exists \mu>0, 0<\mu\leq\lambda_i\leq\kappa_{n}\mu}f_i(\lambda_i,\hat{\lambda}_i)= \min\left\{\max(\mu,\hat{\lambda}_i),\kappa_{n}\mu\right\}
\end{eqnarray}
and
\begin{eqnarray*}
  f_i(\lambda_i^*(\mu),\hat{\lambda}_i)=\left\{
              \begin{array}{ll}
                (\kappa_{n}\mu-\hat{\lambda}_i)^2, & \hbox{$\mu< \frac{\hat{\lambda}_i}{\kappa_{n}}$;} \\
                0, & \hbox{$\frac{\hat{\lambda}_i}{\kappa_{n}} \leq \mu\leq \hat{\lambda}_i$;} \\
                (\mu-\hat{\lambda}_i)^2, & \hbox{$\mu>\hat{\lambda}_i$.}
              \end{array}
            \right.
\end{eqnarray*}
Clearly $f_i(\lambda_i^*(\mu),\hat{\lambda}_i)$ is convex and continuously differentiable.
In the high-dimensional data setting, $S_n$ has at most $n$ nonsingular eigenvalues and  $\hat{\lambda}_i=0$ when $i=n+1,\cdots, p$, and thus we have $\hat{\lambda}_1/\hat{\lambda}_p = \infty$. To meet the constraint $\kappa(\Sigma)\leq \kappa_n$, the last $p-n$ zero eigenvalues should be truncated and the optimal eigenvalues of $\Sigma$ are given by
\begin{eqnarray*}
  \mu=\lambda_p^*=\cdots = \lambda_{n+1}^*\leq\lambda_n^*(\mu)\leq\cdots\leq\lambda_1^*(\mu)\leq\kappa_{n}\mu.
\end{eqnarray*}

Therefore, finding the optimal $\Lambda^{*}$ in \eqref{eq.LM1} can be transformed into minimizing the following univariate function with respect to $\mu\in(0,+\infty)$
\begin{eqnarray}
\label{eq.LM2}
  f(\mu) &=&\sum_{i=1}^{p}f_i(\lambda_i^*(\mu),\hat{\lambda}_i)\nonumber\\
  &=&\sum_{i:\hat{\lambda}_i<\mu}f_i(\mu,\hat{\lambda}_i) +\sum_{i:\mu\leq\hat{\lambda}_i\leq\kappa_{n}\mu}f_i(\hat{\lambda}_i,\hat{\lambda}_i) +\sum_{i:\kappa_{n}\mu<\hat{\lambda}_i}f_i(\kappa_{n}\mu,\hat{\lambda}_i)\nonumber\\
  &=& \sum_{i:\hat{\lambda}_i<\mu}(\mu-\hat{\lambda}_i)^2 +\sum_{i:\kappa_{n}\mu<\hat{\lambda}_i}(\kappa_{n}\mu-\hat{\lambda}_i)^2.
\end{eqnarray}
Since $\hat{\lambda}_1\geq \cdots \geq \hat{\lambda}_n> \hat{\lambda}_{n+1}= \cdots =\hat{\lambda}_{p}=0$, it can be checked that $f(\mu)$ is strictly convex and continuously differentiable, and
\begin{eqnarray}
\label{eq.LM3}
  f^{'}(\mu) &=&\sum_{i:\hat{\lambda}_i<\mu}2(\mu-\hat{\lambda}_i) +\sum_{i:\kappa_{n}\mu<\hat{\lambda}_i}2\kappa_n(\kappa_{n}\mu-\hat{\lambda}_i)
\end{eqnarray}
and $f^{''}(\mu) =\sum_{i:\hat{\lambda}_i<\mu}2 +\sum_{i:\kappa_{n}\mu<\hat{\lambda}_i}2\kappa_n^2>0$. In order to determine the optimal $\mu^{*}$ that minimizes \eqref{eq.LM2}, we define
\begin{eqnarray*}
  R_{\alpha,\beta}&=&\{\mu:\hat{\lambda}_{\alpha} \geq \kappa_n\mu > \hat{\lambda}_{\alpha+1}\textrm{ and }\hat{\lambda}_{\beta-1}\geq\mu>\hat{\lambda}_{\beta}\}, \\
   f_{\alpha,\beta}(\mu)&=&\sum_{i=1}^{\alpha}f_i(\kappa_n\mu,\hat{\lambda}_i) +\sum_{i=\alpha+1}^{\beta-1}f_i(\hat{\lambda}_i,\hat{\lambda}_i) +\sum_{i=\beta}^{n}f_i(\mu,\hat{\lambda}_i) + (p-n)\mu^2\\
&=&\sum_{i=1}^{\alpha}(\kappa_n\mu-\hat{\lambda}_i)^2 +\sum_{i=\beta}^{n}(\mu-\hat{\lambda}_i)^2 + (p-n)\mu^2,
\end{eqnarray*}
for $\alpha\in\{1,\cdots,n\}$ and $\beta\in\{2,\cdots,n+1\}$ with $\beta-1\geq \alpha$.
Note that $f(\mu)=f_{\alpha,\beta}(\mu)$ for $\mu\in R_{\alpha,\beta}$, and we will also show that $\{R_{\alpha,\beta}\}$ can be used to form a partition of the region $(0,\hat{\lambda}_1]\times (0,\hat{\lambda}_1]$ in the next paragraph. These definitions provide basic tools for finding the optimal $\mu^{*}$ in $(0,\hat{\lambda}_1/\kappa_n]$. To show this, we can see that when $\mu>\hat{\lambda}_1/\kappa_n$ or equivalently $\kappa_n\mu>\hat{\lambda}_1$,  from \eqref{eq.LM3} we obtain
\begin{eqnarray*}
f^{'}(\mu)&=&\sum_{i:\hat{\lambda}_i<\mu}2(\mu-\hat{\lambda}_i) >0.
\end{eqnarray*}
Then, by the continuity of $f^{'}(\mu)$, when $\mu$ approaches $0$, we have
\begin{eqnarray*}
\lim_{\mu\rightarrow 0}f^{'}(\mu)=-\sum_{i:0<\hat{\lambda}_i}2\kappa_n\hat{\lambda}_i<0.
\end{eqnarray*}
Thus there exists a small neighbourhood of 0 in which $f^{'}(\mu)$ is strictly smaller than 0, and the unique minimizer $\mu^{*}$ of $f(\mu)$ must be contained in $(0,\hat{\lambda}_1/\kappa_n]$ and satisfies $f^{'}(\mu^*)=0$ due to the strict monotonicity of $f^{'}(\mu)$. In addition, with a similar analysis, we also note that for $f_{\alpha,\beta}(\mu)$ with $\mu\in(0,\infty)$ its unique minimizer is given by
\begin{eqnarray*}
  \mu_{\alpha,\beta} = \frac{\mathop\kappa_{n}\sum_{i=1}^{\alpha}\hat{\lambda}_i+\sum_{i=\beta}^{n}\hat{\lambda}_i} {\alpha\kappa_{n}^2+p-\beta+1},
\end{eqnarray*}
which may not be contained in $R_{\alpha,\beta}$ with the corresponding $\alpha$ and $\beta$. However, by the partitioning property of $\{R_{\alpha,\beta}\}$ and uniqueness of $\mu^{*}$ and $\mu_{\alpha,\beta}$, we can conclude that the optimal $\mu^{*}$ must be contained in some $R_{\alpha,\beta}$ and satisfies $\mu^{*}=\mu_{\alpha,\beta}$. This also provides the fundamental idea for searching the optimal $\mu^{*}$ in $O(n)$ operations.

To show the optimal $\mu^{*}$ can be found in $O(n)$ operations, we adapt the geometric idea given in \cite[Algorithm~1]{WonL13} and \cite{GuoZ17}. Let $\nu=\kappa_n \mu$, then $\left\{R_{\alpha,\beta}\right\}$ in $\mu-\nu$ plane partitions the region
\begin{eqnarray*}
  \left\{(\mu,\nu): 0<\mu\leq \hat{\lambda}_1, 0<\nu\leq \hat{\lambda}_1\right\}
\end{eqnarray*}
and the point $(\mu_{\alpha,\beta},\nu_{\alpha, \beta})$ is on the line $\nu=\kappa_n\mu$. So what we need to do is to search the optimal $\mu^*$ along the line $\nu=\kappa_n\mu$ and check whether $(\mu_{\alpha,\beta},\nu_{\alpha, \beta})$ is in $R_{\alpha,\beta}$. Because $\kappa_n \in [1,\infty)$ and $\hat{\lambda}_n>0$, if the line intersects $R_{\alpha,\beta}$, then the next intersection must occurs in $R_{\alpha+1,\beta}$, $R_{\alpha,\beta+1}$ or $R_{\alpha+1,\beta+1}$. Thus we need at most $2n$ tests to verify the condition $\mu_{\alpha,\beta}\in R_{\alpha,\beta}$. But different from \cite{WonL13,LLV20}, we do not need to search the first intersection since $\lambda_{n+1}=0$.
\end{proof}

\end{document}